\newtheorem{theorem}{Theorem}
\newtheorem{proposition}{Proposition}
\newtheorem{lemma}{Lemma}
\newtheorem{corollary}{Corollary}
\theoremstyle{remark}
\newtheorem{question}{Question}
\newtheorem{remark}{Remark}
\newcommand{\C}{\mathbb{C}}
\newcommand{\R}{\mathbb{R}}
\newcommand{\D}{\Omega}
\newcommand{\Dc}{\overline{\Omega}}
\newcommand{\dbar}{\overline{\partial}}
\title[Compactness of Hankel Operators]{Compactness of Hankel Operators and Analytic Discs in the Boundary of Pseudoconvex Domains}
\author{\u{Z}eljko \u{C}u\u{c}kovi\'c}
\address[\u{Z}eljko \u{C}u\u{c}kovi\'c]{University of Toledo, Department of Mathematics, Toledo, OH 43606, USA}
\email{zcuckovi@math.utoledo.edu}
\author{S\"{o}nmez \c{S}ahuto\u{g}lu}
\address[S\"{o}nmez \c{S}ahuto\u{g}lu]{University of Michigan, Department of Mathematics, Ann Arbor, MI 48109, USA}
\email{sonmez@umich.edu}
\thanks{The second author is supported in part by NSF grant number DMS-0602191.}
\subjclass[2000]{Primary 47B35, 32W05}
\keywords{Hankel operators, $\dbar$-Neumann problem, pseudoconvex,  analytic discs}
\date{\today}
\begin{document}
\maketitle

\begin{abstract}
Using several complex variables techniques, we  investigate the interplay between the geometry of the boundary and  compactness of Hankel operators. Let $\beta$ be a function smooth  up to the boundary on a smooth bounded pseudoconvex domain $\D \subset \C^{n}.$ We show that, if $\D$ is convex  or the Levi form of the boundary of $\D$ is of rank at least $n-2,$ then compactness of the Hankel operator $H_{\beta}$  implies that $\beta$ is holomorphic ``along'' analytic discs in the boundary. Furthermore, when $\D$ is convex in $\C^2$ we show that the condition on $\beta$ is necessary and sufficient for compactness of $H_{\beta}.$
\end{abstract}

%%%%%%%%%%%%
%%%%%%%%%%%%
\section{Introduction}
%%%%%%%%%%%%
%%%%%%%%%%%%

Hankel operators form an important class of operators on spaces of holomorphic functions. Initially there were two descriptions of Hankel operators, one considered it as an operator on the one-sided sequence space $l^2$ into itself, and the other as an operator from the Hardy space $H^2$ of the unit disk into its orthogonal complement in $L^2$.  These operators are closely connected to problems in approximation theory as shown by now the famous work of Nehari \cite{Nehari57} on one hand, and Adamjan, Arov and Krein \cite{AdamjanEtall71} on the other.  These operators also have a close connection to Toeplitz operators, and the commutators of projections and multiplication operators on $L^2$. More about Hankel operators and related topics can be found \cite{Peller03}

Let $\Omega$ be a bounded domain in $\C^n$ and $dV$ denote the Lebesgue volume measure. The Bergman space $A^2(\Omega)$ is the closed subspace of $L^2(\Omega)$ consisting of holomorphic functions on $\Omega$.  The Bergman projection $P$ is the orthogonal projection from $L^2(\D)$ onto $A^2(\D)$ and  can be written explicitly as $Pf(z) = \int_{\Omega}K(z, w)f(w) dV(w),$ where $K(z, w)$ is the Bergman kernel of $\Omega$. For $\beta\in L^2(\Omega)$ we can define the Hankel operator $H_{\beta}$ from $A^2(\Omega)$ into $L^2(\Omega)$ by $H_{\beta}(g) = (Id - P)(\beta g) .$ In general, $H_{\beta}$ is only densely defined on $A^2(\Omega)$. When $\D$ is  a bounded pseudoconvex domain, Kohn's formula $P=Id-\dbar^*N\dbar$ ($N$ is the (bounded) inverse of complex Laplacian,  $\dbar\dbar^*+\dbar^*\dbar,$ and $\dbar^*$ is the Hilbert space adjoint of $\dbar$ on the square integrable $(0,1)$-forms on $\D$) implies that $H_{\beta}(f)=\dbar^*N\dbar(\beta f)=\dbar^{*}N(f\dbar\beta) $ for $f\in A^{2}(\D)$ and $\beta \in C^{1}(\Dc).$ This will be the  main tool in this paper as it will allow us to use several complex variables techniques to study Hankel operators. We refer the reader to \cite{ChenShawBook} for more information on the  $\dbar$-Neumann operator.

The study of the size estimates of Hankel operators on Bergman spaces has inspired a lot of work in the last 20 years. The first result  in the study of boundedness and compactness of Hankel operators was done by Axler \cite{Axler86} on the Bergman space of the open unit disk $\Delta$.  He showed that, for $\beta$ holomorphic on $\Delta$, $H_{\overline \beta}$ is bounded if and only if $\beta$ is in the Bloch space, and $H_{\overline \beta}$ is compact if and only if $\beta$ is in the little Bloch space.  In the case of a general symbol, Zhu \cite{Zhu87} showed the connection between size estimates of a Hankel operator and the mean oscillation of the symbol in the Bergman metric.  In \cite{BBCZ90}, Bekolle, Berger, Coburn and Zhu studied the same problem in the setting of bounded symmetric domains in $\C^n$ with the restriction that $H_{\beta}$ and $H_{\overline \beta}$ are simultaneously bounded and compact with $\beta\in L^2(\Omega)$.  Stroethoff and Zheng \cite{Stroethoff90IJM,Zheng89} independently gave a characterization for compactness of Hankel operators with bounded symbols on $\Delta.$ Later Stroethoff \cite{Stroethoff90JOT} generalized these results to the case of the open unit ball and polydisc in $\C^n.$   Luecking \cite{Luecking92} gave different criteria for boundedness and compactness of  $H_{\beta}$   on $A^p(\Omega)$ with $1 < p < \infty$.  Peloso \cite{Peloso94} extended Axler's result to Bergman spaces on smooth bounded strongly pseudoconvex domains.  For the same domains, Li \cite{Li94} characterized bounded and compact Hankel operators $H_{\beta}$ with symbols $\beta\in  L^2(\Omega)$.  Beatrous and Li \cite{BeatrousLi93} obtained related results for the commutators of multiplication operators and $P$ on more general domains, that include smooth bounded strongly pseudoconvex domains.

The novelty of our approach is that we put an emphasis on the interplay between the geometry of the domain and the symbols of Hankel operators.  Although, our symbols are more restricted the domains we consider are much more general and allow rich geometric structures. 

In several complex variables, compactness of the $\dbar$-Neumann operator has been  an active research area for the last couple of decades. We refer the reader to a very nice survey \cite{FuStraube01} for more information about compactness of the $\dbar$-Neumann operator. Compactness of the canonical solution operators for $\dbar$ on the unit disk has been discussed in \cite{Haslinger01}, where it was in fact shown that this operator restricted to $(0, 1)$-forms with holomorphic coefficients is a Hilbert-Schmidt operator. Fu and Straube \cite{FuStraube98} showed that presence of analytic discs in the boundary of a bounded convex domain in $\C^n$ is equivalent to the non-compactness of the $\overline\partial$-Neumann operator. The second author and Straube \cite{SahutogluStraube06} used their techniques to prove that analytic discs are obstructions for compactness of the $\dbar$-Neumann operator on smooth bounded  pseudoconvex domains in $\C^n$ whose Levi form has maximal rank. In $\C^2$ their result reduces to a folklore result of Catlin \cite{FuStraube98}.  

Given Kohn's formula it is natural to expect a strong relationship between Hankel operators and the $\dbar$-Neumann operator. The following fact confirms this expectation. Compactness of the $\dbar$-Neumann operator implies compactness of Hankel operators with symbols are smooth on the closure \cite{FuStraube01}.  Actually, the statement in \cite{FuStraube01} requires the symbol to have bounded first order derivatives. But any symbol that is continuous on the closure can be approximated uniformly by symbols that are smooth on the closure of the domain. Hence the resulting Hankel operators converge in norm preserving compactness. In this paper we show that the theory for compactness of Hankel operators is somewhat parallel to the theory of compactness of the $\dbar$-Neumann operator in terms of analytic structures in the boundary.  Previous work in this direction was done by Knirsch and Schneider  \cite{KnirschSchneider07}. 

Throughout the paper $b\D$ denotes the boundary of $\D.$ Our first result concerns smooth bounded pseudoconvex domains in $\C^{n}.$ 

\begin{theorem}\label{ThmCn}
Let $\D$ be a smooth bounded pseudoconvex domain in $\C^n$ for $n\geq 2$ and $\beta\in C^{\infty}(\Dc).$ Assume that the Levi form of $b\D$ is of rank at least $n-2.$ If $H_{\beta}$ is compact on $A^2(\D),$ then  $\beta \circ f$ is holomorphic for any holomorphic function $f:\Delta \to b\D .$
 \end{theorem}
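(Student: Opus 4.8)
The plan is to prove the contrapositive: assuming that $\beta\circ f$ fails to be holomorphic for some holomorphic $f:\Delta\to b\D$, I will produce a sequence in $A^2(\D)$, bounded and converging weakly to $0$, on which $H_{\beta}$ does not tend to $0$ in norm, contradicting compactness. The starting point is the identity $\dbar(H_{\beta}g)=g\,\dbar\beta$ for $g\in A^2(\D)$, which follows from $H_{\beta}g=(Id-P)(\beta g)$ and the holomorphicity of $P(\beta g)$. Pairing against a $(0,1)$-form $\alpha$ with compact support in $\D$ and integrating by parts gives $\langle g\,\dbar\beta,\alpha\rangle=\langle H_{\beta}g,\dbar^{*}\alpha\rangle$, whence $|\langle g\,\dbar\beta,\alpha\rangle|\le \|H_{\beta}g\|\,\|\dbar^{*}\alpha\|$. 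It therefore suffices to build families $g_k\in A^2(\D)$ and compactly supported $(0,1)$-forms $\alpha_k$ with $\|g_k\|\lesssim 1$, $g_k\rightharpoonup 0$, and $\|\dbar^{*}\alpha_k\|\lesssim 1$, yet $|\langle g_k\,\dbar\beta,\alpha_k\rangle|\ge c>0$.

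Writing $\dbar\beta=\sum_j\beta_{\bar z_j}\,d\bar z_j$ and using $f^{*}(\dbar\beta)=\dbar(\beta\circ f)$, the hypothesis that $\beta\circ f$ is not holomorphic means $\sum_j\beta_{\bar z_j}(f(\zeta_0))\,\overline{f_j'(\zeta_0)}\neq 0$ for some $\zeta_0\in\Delta$; set $p=f(\zeta_0)$, and note this already forces $f'(\zeta_0)\neq 0$. Since $f(\Delta)\subset b\D$ and $\D$ is pseudoconvex, the tangent vector $f'(\zeta_0)$ lies in the null space of the Levi form at $p$. The hypothesis that the Levi form has rank at least $n-2$ forces this null space to be at most one complex dimensional, so the disc direction spans it, while the remaining complex tangential directions are strictly positive. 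I will use this to choose coordinates centered at $p$ in which the disc direction is flat, the transverse complex tangential directions are strictly plurisubharmonic, and the component of $\dbar\beta$ along $\overline{f'}$ is bounded away from $0$ near $p$.

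With such coordinates I take $\alpha_k$ aligned with the conjugate disc direction, cut off to a fixed piece of the disc in the flat direction but concentrated in a shrinking neighborhood of the boundary in the transverse and normal directions, so that on its support $\sum_j\beta_{\bar z_j}\overline{\alpha_{k,j}}$ reproduces a fixed multiple of the nonvanishing quantity $\sum_j\beta_{\bar z_j}\overline{f_j'}$. For $g_k$ I take a unit-norm sequence of holomorphic functions concentrating along the same piece of the disc and converging weakly to $0$; pseudoconvexity together with strict positivity of the Levi form transverse to the disc supplies enough such functions, for instance via Hörmander's weighted $\dbar$ estimates or via normalized Bergman kernels along interior points approaching $p$ in the disc direction. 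One then verifies that $\langle g_k\,\dbar\beta,\alpha_k\rangle$ stays bounded below while $\|\dbar^{*}\alpha_k\|$ stays bounded, forcing $\|H_{\beta}g_k\|\ge c>0$ and contradicting compactness.

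The main obstacle is the quantitative balance in this last step. To arrange $g_k\rightharpoonup 0$ and to detect the boundary behaviour, both $g_k$ and $\alpha_k$ must concentrate toward $b\D$, and the transverse concentration inflates $\dbar^{*}\alpha_k$ through the derivatives it contains; keeping $\|\dbar^{*}\alpha_k\|$ bounded while holding the pairing away from $0$ is precisely where the geometry must be exploited. The flatness along the null direction furnished by the rank assumption is what lets the form be spread out, hence derivative-cheap, along the disc, while the strict pseudoconvexity transverse to the disc provides both the localization and the holomorphic functions $g_k$. Carrying out this estimate, in particular calibrating the transverse concentration rate against the normal-direction scaling, is the technical heart of the argument.
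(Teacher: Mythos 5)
Your setup is sound as far as it goes: the identity $\dbar(H_{\beta}g)=g\,\dbar\beta$, the pairing $\langle g\,\dbar\beta,\alpha\rangle=\langle H_{\beta}g,\dbar^{*}\alpha\rangle$ for compactly supported $\alpha$, and the observation that $f'(\zeta_0)$ lies in the Levi null space, so that the rank hypothesis makes the disc direction span that null space --- all of this is correct and consistent with the paper. But the argument stops exactly where the theorem's difficulty begins: you never construct the families $g_k$ and $\alpha_k$, and you yourself flag the balance between $\|\dbar^{*}\alpha_k\|\lesssim 1$ and $|\langle g_k\,\dbar\beta,\alpha_k\rangle|\geq c$ as ``the technical heart'' left to be carried out. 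That balance is not a routine verification. Since $g_k\rightharpoonup 0$ with $\|g_k\|\lesssim 1$ forces $g_k\to 0$ locally uniformly, the support of $\alpha_k$ must chase the boundary at the scale on which $g_k$ retains mass; keeping the pairing bounded below then requires (i) the anisotropic volume of that support (roughly $\delta$ in the normal and ``bad'' directions, $\delta^{1/2}$ in the $k$ strictly pseudoconvex directions, hence volume $\approx\delta^{k+2}$) to match the diagonal kernel blow-up $K_{\D}(q_k,q_k)\gtrsim \delta^{-(k+2)}$, and (ii) off-diagonal comparability of $K_{\D}(\cdot,q_k)$ with $K_{\D}(q_k,q_k)$ on that anisotropic region --- an estimate that is genuinely delicate at weakly pseudoconvex points and is nowhere supplied. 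As written, the rank hypothesis (the crux of the theorem) is never invoked quantitatively; also, your ``choose coordinates in which the disc direction is flat'' is itself the nontrivial Lemma 1 of \cite{SahutogluStraube06} (and your kernels should peak at points approaching $p$ along the inward normal, not ``in the disc direction'').

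For comparison, the paper closes this gap by a mechanism that avoids shrinking test forms entirely. After flattening via \cite{SahutogluStraube06} and localizing (Proposition \ref{prop1} and Lemma \ref{lem2}), it integrates by parts only in the flat variable $z_1$, against a \emph{fixed} form $\bar\gamma\,d\bar z_1$ chosen so that $\gamma\,\partial\beta_1/\partial\bar z_1$ is a fixed cutoff; together with the mean value property this gives the pointwise estimate $|\alpha(0,w)|^2\lesssim\int|H^{\D_1}_{\beta_1}(\alpha)(z_1,w)|^2\,dV(z_1)$ for \emph{every} transverse point $w$, hence an estimate for the restriction of $\alpha$ to a fixed transverse slice $L_1$ (a ball in $\C^{k+1}$ touching $b\D_1$ at the origin). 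The concentration lives entirely in the sequence of normalized Bergman kernels $\alpha_j$, and the lower bound comes from the diagonal comparison $\|\alpha_j|_{L_1}\|_{L^2(L_1)}\geq\sqrt{K_{\D}(q_j,q_j)/K_{L_1}(p_j,p_j)}$, where the rank hypothesis enters exactly once, through Proposition \ref{PropFu}: $K_{\D}(q_j,q_j)\gtrsim (d_{b\D}(q_j))^{-(k+2)}$, matching the ball asymptotics of $K_{L_1}$. To complete your version you would need to carry out the anisotropic scaling and off-diagonal estimates above; alternatively, the slicing argument requires only diagonal kernel bounds and is the shorter path.
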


\begin{remark}\label{RemarkAlong}
We note that the statement ``$\beta \circ f$ is holomorphic'' can be interpreted as meaning  that $\beta$ is holomorphic ``along'' $M=f( \Delta).$ However it may not be holomorphic in the transversal directions.
\end{remark}

\begin{remark}
One can check that the proof of Theorem \ref{ThmCn} shows  that compactness of  $H_{\beta}$ on $A^2(\D)$ for $\beta \in C^{\infty}(\Dc)$ still implies that  $\beta \circ f$ is holomorphic for any holomorphic function $f:\Delta \to b\D $ when $\D$ satisfies the following property: If the Levi form of $b\D$ is of rank $k$ for $0\leq k\leq n-1$  at $p,$ then there exists a $n-k-1$ dimensional complex manifold in $b\D$ through $p.$ 
\end{remark}

Since in $\C^2$ the Levi form has only one eigenvalue the condition on the Levi form in Theorem \ref{ThmCn} is always satisfied. Therefore, for $n=2$ we have the following corollary. 

\begin{corollary}\label{ThmC2}
Let $\D$ be a smooth bounded pseudoconvex domain in $\C^{2}$ and $\beta\in C^{\infty}(\Dc).$  If $H_{\beta}$ is compact on $A^{2}(\D)$ then   $\beta \circ f$ is holomorphic for any holomorphic function $f:\Delta \to b\D .$
\end{corollary}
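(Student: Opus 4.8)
The plan is to obtain Corollary~\ref{ThmC2} directly from Theorem~\ref{ThmCn}; the only point to verify is that the hypothesis on the Levi form is automatically satisfied when $n=2$. So the entire proof reduces to a dimension count, and no new analytic input beyond Theorem~\ref{ThmCn} is required.

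The key observation is that in $\C^2$ the complex tangent space $T^{1,0}_p(b\D)$ at each boundary point $p$ is one complex dimensional. Consequently the Levi form at $p$ is a Hermitian form in a single variable, i.e.\ essentially a real scalar, and its rank is either $0$ or $1$. In particular its rank is always at least $0 = n-2$. Thus the hypothesis ``the Levi form of $b\D$ is of rank at least $n-2$'' holds at every point of $b\D$ with no further assumption on $\D$, and Theorem~\ref{ThmCn} applies verbatim: if $H_{\beta}$ is compact on $A^2(\D)$, then $\beta \circ f$ is holomorphic for every holomorphic $f:\Delta \to b\D$. This is exactly the assertion of the corollary, so the argument is complete once this triviality is recorded.

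I do not expect any genuine obstacle at the level of the corollary, since all of the substance lives in Theorem~\ref{ThmCn}. For orientation, the rank hypothesis there serves to guarantee that a boundary disc $f(\Delta)$ carries the \emph{entire} degeneracy of the Levi form: the disc direction always lies in the kernel of the Levi form, so the bound rank $\geq n-2$ forces that kernel to be exactly one complex dimensional, namely the disc direction itself, leaving the transverse directions strictly pseudoconvex. This is what permits the construction, from a hypothetically non-holomorphic $\beta \circ f$, of a bounded sequence in $A^2(\D)$ tending weakly to $0$ on which $H_{\beta}$ fails to tend to $0$ in norm, contradicting compactness. In $\C^2$ this degeneracy analysis is empty, because there is only one complex tangential direction to begin with, which is precisely why the rank condition imposes nothing and the corollary follows immediately.
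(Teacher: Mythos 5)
Your proposal is correct and is essentially the paper's own derivation: the paper obtains Corollary \ref{ThmC2} from Theorem \ref{ThmCn} by exactly your observation that in $\C^2$ the Levi form has a single eigenvalue, so its rank is automatically at least $n-2=0$ and the rank hypothesis is vacuous. (The argument in Section \ref{ProofThmCn} that carries the label ``Proof of Theorem \ref{ThmC2}'' is really a direct proof of a stronger general statement subsuming Theorem \ref{ThmCn}, so no additional analytic input is needed at the level of the corollary, just as you say.)
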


For convex domains in $\C^{n}$ we prove the same result without any restriction on the Levi form. 

\begin{theorem}\label{ThmConvexCn}
Let $\D$ be a smooth bounded convex domain in $\C^{n}$ for $n\geq 2$ and $\beta\in C^{\infty}(\Dc).$ Assume that  $H_{\beta}$ is compact on $A^{2}(\D).$  Then   $\beta \circ f$ is holomorphic for any holomorphic function $f:\Delta \to b\D .$
\end{theorem}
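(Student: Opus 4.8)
The plan is to prove the contrapositive: assuming that $\beta\circ f$ fails to be holomorphic for some holomorphic $f:\Delta\to b\D$, I would produce a bounded, weakly null sequence in $A^2(\D)$ on which $H_{\beta}$ does not go to zero in norm, contradicting compactness. After a reparametrization we may take $f$ nonconstant and assume $\partial(\beta\circ f)/\partial\bar\zeta(0)\neq 0$ with $p:=f(0)=0$. The convexity of $\D$ enters exactly here, replacing the Levi-form hypothesis of Theorem \ref{ThmCn}: at $p$ there is a complex-linear functional $L$ with $\operatorname{Re}L<0$ on $\D$ and $\operatorname{Re}L(p)=0$. Since $\operatorname{Re}(L\circ f)$ is harmonic on $\Delta$ and attains its maximum $0$ at the interior point $0$, the maximum principle forces $L\circ f\equiv 0$; hence, after an affine change of coordinates with $L=z_n$, the disc $M=f(\Delta)$ lies in the flat boundary piece $K:=\Dc\cap\{z_n=0\}$, a compact convex subset of $\{z_n=0\}\cong\C^{n-1}$ contained entirely in $b\D$, with $\Dc\subset\{\operatorname{Re}z_n\le 0\}$.

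The test functions are the normalized exponentials $g_t:=e^{tz_n}/\|e^{tz_n}\|_{A^2(\D)}$ for $t\to\infty$. Because $\operatorname{Re}z_n\le 0$ on $\Dc$, each $e^{tz_n}$ is bounded by $1$ and tends to $0$ pointwise on $\D$, so $\|e^{tz_n}\|_{A^2}\to 0$; a Laplace-type estimate near the flat piece $\{z_n=0\}\cap\Dc$ gives $\|e^{tz_n}\|_{A^2}^2\sim c\,t^{-a}$ for some $a>0$, from which one checks that $\|g_t\|=1$ while $g_t\rightharpoonup 0$ weakly in $A^2(\D)$ (test against a dense set of polynomials and use the uniform norm bound). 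Consequently, if $H_{\beta}$ were compact we would have $\|H_{\beta}g_t\|\to 0$.

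To contradict this I would bound $\|H_{\beta}g_t\|$ from below by duality. Writing $H_{\beta}g_t=(Id-P)(\beta g_t)$ and choosing a holomorphic $\phi$ whose conjugate gradient $\overline{\partial\phi}$ is aligned with the conjugate tangent direction of $M$ along $f$, set $\psi_t:=(Id-P)(\overline{\phi}\,g_t)$, so that $\|H_{\beta}g_t\|\ge|\langle (Id-P)(\beta g_t),\psi_t\rangle|/\|\psi_t\|$. The denominator is controlled uniformly: $\psi_t$ is the canonical solution of $\dbar u=g_t\,\dbar\overline{\phi}$, so the H\"ormander $L^2$ estimate on the bounded convex (hence pseudoconvex) domain $\D$ gives $\|\psi_t\|\le C\|g_t\|=C$. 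For the numerator I would use $\langle (Id-P)(\beta g_t),\psi_t\rangle=\langle N(g_t\,\dbar\beta),\,g_t\,\dbar\overline{\phi}\rangle$ together with the fact that $|g_t|^2\,dV$ concentrates, as $t\to\infty$, on the flat piece $K\supset M$, so that the pairing converges to a nonzero constant multiple of an integral over $M$ of $\partial(\beta\circ f)/\partial\bar\zeta$, which is nonzero by assumption. This yields $\liminf_t\|H_{\beta}g_t\|>0$, the desired contradiction.

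The main obstacle is this last step: carrying out the concentration asymptotics of the integrals against the measures $|g_t|^2\,dV$ and, above all, showing that the holomorphic (Bergman projection) part $P(\beta g_t)$ cannot cancel the surviving antiholomorphic derivative of $\beta$ along $M$ in the limit --- equivalently, that $\operatorname{dist}_{L^2}(\beta g_t,A^2(\D))$ stays bounded away from $0$. This is precisely where the flatness of $K$ and the alignment of $\phi$ with the disc must be used quantitatively; everything else is either convex geometry (the support function) or standard $L^2$ $\dbar$-theory. I would also note the alternative route: one can verify that convex domains satisfy the geometric property in the Remark following Theorem \ref{ThmCn} --- at each boundary point the supporting complex hyperplane produces an affine complex subspace of $b\D$ of the complementary dimension --- so that the argument of Theorem \ref{ThmCn} applies with the convex support function in place of the Levi-form hypothesis.
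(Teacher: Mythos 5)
Your reduction of the disc to a flat boundary piece (supporting functional plus maximum principle) is correct --- it is essentially the first half of the paper's Lemma \ref{LemAffine} --- and the exponentials $g_t$ are a plausible weakly null family. But the proof has a genuine gap exactly where you flag ``the main obstacle'': you never establish $\liminf_t\|H_{\beta}g_t\|>0$, and the duality scheme you sketch does not plausibly yield it. The pairing $\langle N(g_t\,\dbar\beta),\,g_t\,\dbar\overline{\phi}\rangle$ has the nonlocal operator $N$ inside; concentration of $|g_t|^2\,dV$ near the flat piece gives no control of where $N(g_t\,\dbar\beta)$ lives, which is precisely the cancellation problem you acknowledge. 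Worse, because your $\phi$ is holomorphic, the formal limit is an integral of $\partial(\beta\circ f)/\partial\bar\zeta$ against a fixed conjugate-holomorphic weight, and such an integral can vanish even when $\partial(\beta\circ f)/\partial\bar\zeta\not\equiv 0$, so nonvanishing of the limit is not forced by your hypotheses. The paper's resolution of exactly this difficulty is a local integration-by-parts device that bypasses $N$ entirely: pair $\alpha\,\dbar\beta$ with the $(0,1)$-form $\bar\gamma\, d\bar z_1$, where $\gamma$ is the \emph{non-holomorphic} function defined by $\gamma\,\partial\beta/\partial\bar z_1=\chi(|z_1|)\cdots\chi(|z_n|)$ (a nonnegative bump, so no cancellation can occur), use $\dbar$-closedness of $\alpha\,\dbar\beta$ to replace it by $\dbar H_{\beta}(\alpha)$, and integrate by parts in the $z_1$-variable alone. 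This produces the slice estimate \eqref{EqnSlice}, which bounds the $L^2$ norm of the restriction of $\alpha$ to the transverse slice by $\|H_{\beta}(\alpha)\|_{L^2(\D)}$; applied to differences of test functions it converts non-compactness of the restricted family into non-compactness of $H_{\beta}$. Without this estimate (or a genuine substitute) your argument does not close; with it, your exponentials could likely replace the paper's test functions, which are instead Bergman-kernel quotients of the transverse slice $L$, shown non-compact via \cite{FuStraube98} and extended to $\D$ with uniform norm control by the Ohsawa--Takegoshi theorem \cite{OhsawaTakegoshi87}.

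The fallback you offer at the end is also unavailable: it is \emph{not} true that convex domains satisfy the geometric property in the Remark following Theorem \ref{ThmCn}. That property demands, at a boundary point where the Levi form has rank $k$, a complex manifold in $b\D$ of dimension $n-k-1$, whereas convexity only provides a supporting hyperplane whose intersection with $b\D$ is a convex set that need not be complex. Concretely, take a smooth bounded convex domain in $\C^3$ with defining function $\rho=x_3+|z_2|^4+y_3^2+f(|z_1|^2)+h(x_3)$, where $f,h\geq 0$ are smooth convex, vanish identically on $[0,1]$ and near $0$ respectively, and grow at infinity. Its boundary contains the disc $\{(z_1,0,0):|z_1|<1\}$, along which the Levi form has rank $0$, so the Remark would require a two-dimensional complex manifold in $b\D$ through such points; but the maximum principle forces any complex manifold in $b\D$ through the origin to lie in $\{z_2=0,\,z_3=0\}$, hence to be at most one-dimensional. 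This failure is exactly why the paper proves Theorem \ref{ThmConvexCn} by a separate argument (Lemma \ref{LemAffine}, slice Bergman kernels, Ohsawa--Takegoshi extension, and the slice estimate) rather than by invoking Theorem \ref{ThmCn}.
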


In the following theorem we show that, when $\D$ is convex in $\C^2,$ the converse of Theorem \ref{ThmCn} is true. 

\begin{theorem} \label{ThmConvex}
Let $\D$ be a smooth bounded convex domain in $\C^{2}$ and $\beta\in C^{\infty}(\Dc).$   If $\beta \circ f$ is holomorphic for any holomorphic $f:\Delta\to b\D,$ then  $H_{\beta}$ is compact.
\end{theorem}

Combining Corollary \ref{ThmC2} (or Theorem \ref{ThmConvexCn}) and Theorem \ref{ThmConvex} we get a necessary and sufficient condition for compactness of $H_{\beta}$ for convex domains in $\C^2.$

\begin{corollary}\label{CorConvex}
Let $\D$ be a smooth bounded  convex domain in $\C^{2}$ and $\beta\in C^{\infty}(\Dc)$. Then  $H_{\beta}$ is compact if and only if $\beta \circ f$ is holomorphic for any holomorphic $f:\Delta\to b\D.$ 
\end{corollary}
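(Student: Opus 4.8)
The plan is to obtain Corollary \ref{CorConvex} as the immediate conjunction of the two one-directional results already established, so I would not attempt any new analysis but rather verify that a smooth bounded convex domain in $\C^2$ simultaneously satisfies the hypotheses of both directions. Since the statement is a biconditional, I would treat necessity and sufficiency separately.

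For the necessity direction (compactness of $H_\beta$ forces $\beta\circ f$ to be holomorphic for every holomorphic $f:\Delta\to b\D$), I would invoke either Corollary \ref{ThmC2} or Theorem \ref{ThmConvexCn}. The key observation is that a smooth bounded convex domain in $\C^2$ is in particular pseudoconvex, and that in $\C^2$ the Levi form of $b\D$ has a single eigenvalue, so the rank hypothesis ``rank at least $n-2$'' of Theorem \ref{ThmCn} reduces to rank at least $0$ and holds automatically. Thus Corollary \ref{ThmC2} applies directly. As an alternative that bypasses the Levi form entirely, Theorem \ref{ThmConvexCn} specialized to $n=2$ gives exactly the same conclusion for convex domains, so either route closes this half.

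For the sufficiency direction ($\beta\circ f$ holomorphic for every holomorphic $f:\Delta\to b\D$ implies $H_\beta$ compact), the hypotheses are word-for-word those of Theorem \ref{ThmConvex}, and its conclusion is precisely the asserted compactness. Hence this half requires no adaptation at all; one simply quotes Theorem \ref{ThmConvex}.

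The honest assessment is that there is no genuine obstacle at the level of this corollary: all the mathematical content resides in the proofs of Theorem \ref{ThmCn} (or Theorem \ref{ThmConvexCn}) and Theorem \ref{ThmConvex}, and the corollary is a bookkeeping combination of the two implications. The only point worth flagging explicitly is the verification that the Levi-rank condition of Theorem \ref{ThmCn} is vacuous in $\C^2$, which is exactly what legitimizes the use of Corollary \ref{ThmC2} here; once that is noted, the biconditional follows formally.
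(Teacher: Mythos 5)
Your proposal is correct and coincides with the paper's own derivation: the authors obtain the corollary precisely by combining Corollary \ref{ThmC2} (or Theorem \ref{ThmConvexCn}) for necessity with Theorem \ref{ThmConvex} for sufficiency, noting as you do that convexity gives pseudoconvexity and that the Levi-rank hypothesis is vacuous when $n=2$. No further comment is needed.
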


\begin{remark}
We note that \cite{MatheosThesis} constructed a smooth bounded pseudoconvex complete Hartogs domain $\D$ in $\C^{2}$ that has no analytic disk in its boundary, yet it does not have a compact $\dbar$-Neumann operator. It would be interesting to know whether there exists a symbol $\beta\in C^{\infty}(\Dc)$ such that the Hankel operator $H^{\D}_{\beta}$ is not compact on $A^2(\D).$
\end{remark}

\begin{remark}
We would like to take this opportunity to point out an inaccuracy. 
Knirsch and Schneider  \cite[Proposition 1]{KnirschSchneider07} claim that if there is an affine disk in the boundary of a bounded convex domain in $\C^{n},$ then the Hankel operator $H_{\bar z_i^m}$ is not compact for  $i=1,2,\ldots,n$ and any positive integer $m$  where $z_i$ is the ith coordinate function. They correctly prove the result when the disk lies in $z_{1}$-coordinate and claim that the proof for $i=2,3,\ldots, n$ is similar. However, Theorem \ref{ThmConvex}  implies that if $\D$ is a smooth bounded convex domain in $\C^2$ and  the set of weakly pseudoconvex points form a disc in $z_{1}$-coordinate, then  $H_{\bar z_2}$ is compact. 
\end{remark}

\begin{remark}
For simplicity we assume that the domains have $C^{\infty}$-smooth boundary and the symbols are smooth up to the boundary. However, one can check that the proofs work under weaker but reasonable smoothness assumptions.
\end{remark}

\begin{remark}
Recently, \c{C}elik and Straube \cite{CelikStraube} studied compactness multipliers for the $\dbar$-Neumann problem (we refer the reader to \cite{CelikStraube} for the definition and some properties of compactness multipliers of the $\dbar$-Neumann problem). This notion is related to that of a symbol of a compact Hankel operator, but there are differences. 
First of all, the $\dbar$-Neumann operator $N$ is applied to square integrable forms and  compactness multipliers are applied after $N$.  In case of Hankel operators,  however, one can think of the $(0,1)$-form $\overline{\partial}\beta$ as acting as a ``pre-multiplier'' (acting before the canonical solution operator $\dbar^*N$) on the Bergman space which is  more rigid than the space of $L^2$ forms. Secondly,  \c{C}elik and Straube proved that on a bounded convex domain, a function that is continuous on the closure is a compactness multiplier if and only if the function vanishes on all the (nontrivial) analytic discs in the boundary. One can show that such symbols produce compact Hankel operators. However, for smooth bounded convex domains in $\C^2$,  a symbol smooth on the closure  produces a compact Hankel operator if and only if the symbol is holomorphic along (see Remark \ref{RemarkAlong}) analytic discs in boundary. (That is, the complex tangential component of the pre-multiplier on any analytic disc in the boundary vanishes). In general, these connections are not well understood. For example, the following question is still open:
\end{remark}

\begin{question}
Assume that $\D$ is a smooth bounded pseudoconvex domain in $\C^n$ and $\beta\in C(\Dc)$ is a compactness multiplier for the $\dbar$-Neumann operator on $L^2_{(0,1)}(\D).$ Is $H_{\beta}$ compact on the Bergman space on $\D?$
\end{question}

%%%%%%%%%%%
%%%%%%%%%%%
\section{Proof of Theorem \ref{ThmCn} and Theorem \ref{ThmConvexCn}}\label{ProofThmCn}
%%%%%%%%%%%%
%%%%%%%%%%%%

Let $\Delta=\Delta_{1}$ denote the unit open disc in $\C, \Delta_{r}$ denote the disc in $\C$ centered at the origin with radius $r,$ and $\Delta^{k}_{r}$ denote the polydisc in $\C^{k}$ of multiradius $(r,\cdots,r).$ We will be using Hankel operators defined on different domains. So to be more precise, let $H^{\D}_{\phi}$ denote the Hankel operator on $\D$ with symbol $\phi$ and $R_{U}$ be the restriction operator onto $U.$ Furthermore, the Bergman projection on $U$ will be denoted by $P_{U}.$  First we will start with a proposition that will allow us to ``localize'' the proofs. 

In the proofs below we will use generalized constants. That is $A\lesssim B$ will mean that there exists a constant $c>0$ that is independent of the quantities of interest such that $A\leq cB .$ At each step the constant $c$ may change but it will stay independent of the quantities of interest.

\begin{proposition}\label{prop1}
Let $\D$ be a bounded pseudoconvex domain in $\C^{n}$ and $\phi \in L^{\infty}(\D) .$ Then  
\begin{itemize}
\item[i)] If $H^{\D}_{\phi}$  is compact  on $A^{2}(\D)$ then for every $p\in b\D$ and $U$ an open neighborhood of $p$ such that $U\cap \D$ is a domain, $H^{U\cap \D}_{R_{U\cap \D}(\phi)}R_{U\cap \D}$ is compact on $A^{2}(\D).$
\item[ii)] If for every $p\in b\D$ there exists an open neighborhood $U$ of $p$ such that $U\cap \D$ is a domain, and $H^{U\cap \D}_{R_{U\cap \D}(\phi)}R_{U\cap \D}$ is compact on $A^{2}(\D),$ then $H^{\D}_{\phi}$  is compact  on $A^{2}(\D).$
\end{itemize}
\end{proposition}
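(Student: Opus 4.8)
The plan is to treat the two implications separately, with part (i) resting on a single operator identity and part (ii) on a partition-of-unity patching argument whose one subtle point is repaired by solving $\dbar$ globally. Throughout I write $U'=U\cap\D$ and abbreviate the local Hankel operator with symbol $R_{U'}(\phi)$ by $H^{U'}_{\phi}$.

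For part (i), I would compute, for $f\in A^{2}(\D)$, the difference $H^{U'}_{\phi}R_{U'}f - R_{U'}H^{\D}_{\phi}f$. Since $\phi R_{U'}f = R_{U'}(\phi f)$, this difference equals $(R_{U'}P_{\D}-P_{U'}R_{U'})(\phi f)$. Now $R_{U'}P_{\D}(\phi f)$ is holomorphic and square integrable on $U'$, hence fixed by $P_{U'}$; factoring $P_{U'}$ out of the difference collapses it to $-P_{U'}R_{U'}H^{\D}_{\phi}f$, yielding the clean identity
\[
 H^{U'}_{\phi}R_{U'} = (Id-P_{U'})\,R_{U'}\,H^{\D}_{\phi}.
\]
Because $R_{U'}\colon L^{2}(\D)\to L^{2}(U')$ has norm at most $1$ and $(Id-P_{U'})$ is bounded, compactness of $H^{\D}_{\phi}$ forces compactness of the right-hand side, proving (i).

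For part (ii) I would use the criterion that a bounded operator is compact if and only if it sends every weakly null sequence to a norm null sequence. Fix $f_k\in A^{2}(\D)$ with $\|f_k\|\le 1$ and $f_k\rightharpoonup 0$; then $f_k\to 0$ uniformly on compact subsets of $\D$. Cover $b\D$ by the finitely many boundary neighborhoods $U_1,\dots,U_m$ supplied by the hypothesis together with one interior set $U_0\Subset\D$, and choose $\chi_0,\dots,\chi_m\in C^{\infty}_{0}$ with $\sum_j\chi_j\equiv 1$ on $\Dc$ and $\mathrm{supp}\,\chi_j\subset U_j$. Set $U_j'=U_j\cap\D$ and let $g^{j}_{k}=P_{U_j'}(\phi f_k|_{U_j'})$ be the local best holomorphic approximants, so that $\|\phi f_k-g^{j}_{k}\|_{L^{2}(U_j')}=\|H^{U_j'}_{\phi}R_{U_j'}f_k\|\to 0$ by the given local compactness (for $j=0$ this instead follows from $f_k\to 0$ on the compact set $\overline{U_0}$). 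Assemble the global candidate $G_k=\sum_j\chi_j g^{j}_{k}$; since $\sum_j\chi_j=1$ we get $\phi f_k-G_k=\sum_j\chi_j(\phi f_k-g^{j}_{k})$, and hence $\|\phi f_k-G_k\|_{L^{2}(\D)}\to 0$.

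The function $G_k$ is smooth but not holomorphic, and the heart of the argument is to correct it cheaply. Since each $g^{j}_{k}$ is holomorphic, $\dbar G_k=\sum_j(\dbar\chi_j)g^{j}_{k}$; and because $\sum_j\dbar\chi_j=\dbar(1)=0$, I may subtract $\big(\sum_j\dbar\chi_j\big)\phi f_k=0$ to obtain the crucial cancellation
\[
 \dbar G_k=\sum_j(\dbar\chi_j)\,(g^{j}_{k}-\phi f_k).
\]
As $\mathrm{supp}(\dbar\chi_j)\cap\D\subset U_j'$ and $\dbar\chi_j$ is bounded, this gives $\|\dbar G_k\|_{L^{2}(\D)}\lesssim\sum_j\|g^{j}_{k}-\phi f_k\|_{L^{2}(U_j')}\to 0$. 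Since $\D$ is bounded pseudoconvex the $\dbar$-Neumann operator $N$ is bounded, so $u_k=\dbar^{*}N\dbar G_k$ solves $\dbar u_k=\dbar G_k$ with $\|u_k\|_{L^{2}(\D)}\lesssim\|\dbar G_k\|_{L^{2}(\D)}\to 0$. Then $G_k-u_k\in A^{2}(\D)$, whence $\|H^{\D}_{\phi}f_k\|=\mathrm{dist}(\phi f_k,A^{2}(\D))\le\|\phi f_k-G_k\|+\|u_k\|\to 0$, which proves (ii). I expect the main obstacle to be exactly the non-holomorphy introduced by the cutoffs: a naive term-by-term bound on $\|\dbar G_k\|$ fails because the $g^{j}_{k}$ need not be small near $b\D$. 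The displayed cancellation is the resolution, since it replaces each $g^{j}_{k}$ by its difference with $\phi f_k$ and thereby reduces the entire error to the local Hankel norms already known to vanish; solving $\dbar$ with the globally bounded $N$ then costs nothing asymptotically.
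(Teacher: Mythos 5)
Your proof is correct and takes essentially the same approach as the paper's. Part (i) arrives at exactly the identity $H^{U\cap \D}_{R_{U\cap \D}(\phi)}R_{U\cap \D}=(Id-P_{U\cap \D})R_{U\cap \D}H^{\D}_{\phi}$ that the paper uses (via the same observation that $P_{U\cap\D}$ fixes restrictions of global Bergman functions), and part (ii) is the paper's partition-of-unity construction --- local holomorphic approximants patched by cutoffs, the cancellation $\sum_j\dbar\chi_j=0$, and the bounded canonical solution operator $\dbar^{*}N$ absorbing the $\dbar$-error --- recast in sequential form (testing on weakly null sequences and using the distance characterization of $\|H^{\D}_{\phi}f\|$) instead of being assembled into a single compact operator $S$ with $H^{\D}_{\phi}=(Id-P_{\D})S$.
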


\begin{proof}
Let us prove i) first. For $f\in A^{2}(\D)$  we have 
\begin{eqnarray*}
 (Id_{U\cap\D}-P_{U\cap\D})R_{U\cap \D} H^{\D}_{\phi}(f)&=& (Id_{U\cap\D}-P_{U\cap\D})R_{U\cap \D}(\phi f-P_{\D}(\phi f))\\
&=&H^{U\cap \D}_{R_{U\cap \D}(\phi)}R_{U\cap \D}(f)+P_{U\cap\D}R_{U\cap \D}P_{\D}(\phi f)-R_{U\cap \D}P_{\D}(\phi f)\\
&=&H^{U\cap \D}_{R_{U\cap \D}(\phi)}R_{U\cap \D}(f).
\end{eqnarray*}
In the last equality we used the fact that $P_{U\cap\D}R_{U\cap \D}P_{\D}=R_{U\cap \D}P_{\D}$ on $L^{2}(\D) .$  Hence 
\[ (Id_{U\cap\D}-P_{U\cap\D})R_{U\cap \D} H^{\D}_{\phi}(f)= H^{U\cap \D}_{R_{U\cap \D}(\phi)}R_{U\cap \D}(f).\]
Therefore, if $H^{\D}_{\phi}$ is compact, then $H^{U\cap \D}_{R_{U\cap \D}(\phi)}R_{U\cap \D}$ is also compact. 

To prove ii) let us choose $\{p_{1},\ldots,p_{m}\}\subset b\D$ and open sets  $U_{1},\ldots,U_{m}$ such that
\begin{itemize}
\item[i)] $U_j$ is a neighborhood of $p_j$ and $U_{j}\cap \D$ is a domain for  $j=1,\ldots,m,$
\item[ii)] $b\D \subset \cup_{j=1}^{m} U_{j},$   
\item[iii)] $S_{j}=H^{U_{j}\cap \D}_{R_{U_{j}\cap \D}(\phi)}R_{U_{j}\cap \D}$ is compact for $j=1,\ldots,m.$ 
\end{itemize}

Let $U_0=\D, S_0=H^{\D}_{\phi},$ and $\{\chi_{j}:j=0,\ldots,m\}$ be a $C^{\infty}$-smooth partition of unity subject to $\{U_{j}:j=0,\ldots,m\} .$ Then for $f\in A^2(\D)$
\begin{eqnarray*}
\dbar \left(\sum_{j=0}^{m} \chi_{j}S_{j}(f)\right)&=& \sum_{j=0}^{m} (\dbar\chi_{j})S_{j}(f)+\sum_{j=0}^{m} \chi_{j}\dbar S_{j}(f)\\
&=&\sum_{j=0}^{m} (\dbar\chi_{j})S_{j}(f)+\sum_{j=0}^{m} \chi_{j}(\dbar \phi) f\\
&=&\sum_{j=0}^{m} (\dbar\chi_{j})S_{j}(f)+(\dbar \phi) f.
\end{eqnarray*}
Hence, since $\dbar \left(\sum_{j=0}^{m} \chi_{j}S_{j}(f)\right)$ and $(\dbar \phi) f$ are $\dbar$-closed we conclude that $\sum_{j=0}^{m} (\dbar\chi_{j})S_{j}(f)$ is $\dbar$-closed. Let 
\begin{equation} \label{eqnprop}
 S=\sum_{j=0}^{m} \chi_{j}S_{j}-\dbar^{*}N^{\D} \sum_{j=0}^{m}(\dbar\chi_{j})S_{j}.
\end{equation}

We write $\chi_0S_0(f)$ as $\chi_0\phi f-\chi_0P_{\D}(\phi f) $ and choose a bounded sequence $\{f_j\}$ in $A^2(\D).$ Let $K$ be a compact set in $\D$ that contains a neighborhood of the support of $\chi_0.$ Cauchy integral formula and Montel's theorem imply that $\{f_j\}$ and $\{P_{\D}(\phi f_j)\}$ have uniformly convergent subsequences on $K.$ Then $\{\chi_0\phi f_ j\}$ and $\{\chi_0P_{\D}(\phi f_j)\} $ have convergent subsequences in $L^2(\D).$ That is, the operator $\chi_0S_0$ is compact on $ A^2(\D).$ Similarly, $(\dbar \chi_0)S_0$ is compact as well. We remind the reader that we assumed that $S_{j}$ is compact for $j=1,\ldots,m$ and $\dbar^{*}N^{\D}$ is continuous on bounded pseudoconvex domains. Therefore, \eqref{eqnprop} implies that $S$ is a compact operator and $\dbar S(f) =(\dbar\phi) f.$ To get the Hankel operator we project onto the complement of $A^{2}(\D).$ Hence using $H_{\phi}^{\D} =(Id_{\D}-P_{\D})S$ we conclude that  $H_{\phi}^{\D}$ is compact on $A^2(\D).$
\end{proof}

 \begin{lemma}\label{lem2}
 Let $\D_{1}$ and $\D_{2}$ be two bounded pseudoconvex domains in $\C^{n},$ 
$\phi\in C^{\infty}(\Dc_{2}),$ and  $F:\D_{1}\to \D_{2}$ be a biholomorphism
that has a smooth extension up to the boundary. Assume that $H_{\phi}^{\D_{2}}$ is compact on $A^2(\D_2)$. Then $H_{\phi\circ F}^{\D_{1}}$ is compact on 
$A^2(\D_1) .$ 
\end{lemma}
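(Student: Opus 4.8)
The plan is to realize the biholomorphism $F$ as a unitary weighted composition operator that conjugates one Hankel operator into the other, so that compactness transfers automatically. Let $u=\det F'$ denote the holomorphic Jacobian determinant of $F,$ and define $U\colon L^{2}(\D_{2})\to L^{2}(\D_{1})$ by $(Ug)(z)=g(F(z))\,u(z).$ The real Jacobian of the underlying map $\R^{2n}\to\R^{2n}$ equals $|u|^{2},$ so the holomorphic change of variables formula gives $\|Ug\|_{L^{2}(\D_{1})}=\|g\|_{L^{2}(\D_{2})};$ since $F$ is a biholomorphism, $U$ is onto, with inverse the analogous operator built from $F^{-1}.$ Hence $U$ is unitary. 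Because the composition of a holomorphic function with $F$ is holomorphic and $u$ is holomorphic, $U$ maps $A^{2}(\D_{2})$ onto $A^{2}(\D_{1}).$

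Next I would record two intertwining identities. First, since $U$ is unitary and carries $A^{2}(\D_{2})$ onto $A^{2}(\D_{1}),$ it carries the orthogonal complement onto the orthogonal complement, and therefore intertwines the Bergman projections: $P_{\D_{1}}U=UP_{\D_{2}}.$ Second, a direct computation shows that $U$ intertwines the multiplication operators: for $g\in L^{2}(\D_{2}),$ one has $(UM_{\phi}g)(z)=\phi(F(z))g(F(z))u(z)=(\phi\circ F)(z)(Ug)(z),$ so $UM_{\phi}=M_{\phi\circ F}U,$ where $M_{\phi}$ denotes multiplication by $\phi$ (a bounded operator because $\phi\in C^{\infty}(\Dc_{2}),$ and hence $\phi\circ F$ is bounded on $\D_{1}$).

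Combining these, for $g\in A^{2}(\D_{1})$ I write $g=Uf$ with $f=U^{-1}g\in A^{2}(\D_{2}).$ Then
\[
H_{\phi\circ F}^{\D_{1}}g=(Id-P_{\D_{1}})M_{\phi\circ F}Uf=(Id-P_{\D_{1}})UM_{\phi}f=U(Id-P_{\D_{2}})M_{\phi}f=UH_{\phi}^{\D_{2}}U^{-1}g,
\]
where the third equality uses $(Id-P_{\D_{1}})U=U(Id-P_{\D_{2}}),$ which follows from $P_{\D_{1}}U=UP_{\D_{2}}.$ Thus $H_{\phi\circ F}^{\D_{1}}=UH_{\phi}^{\D_{2}}U^{-1}$ as bounded operators from $A^{2}(\D_{1})$ into $L^{2}(\D_{1}).$ Since $U$ and $U^{-1}$ are unitary, conjugation preserves compactness, so compactness of $H_{\phi}^{\D_{2}}$ yields compactness of $H_{\phi\circ F}^{\D_{1}}.$

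The only point requiring genuine care is the intertwining $P_{\D_{1}}U=UP_{\D_{2}};$ everything else is bookkeeping with the Jacobian factor $u.$ I expect that identity to be the crux, and it reduces cleanly to the observation that a unitary sending $A^{2}(\D_{2})$ onto $A^{2}(\D_{1})$ must send the orthogonal complements onto one another. The smoothness of the boundary extension of $F$ is not strictly needed for the compactness transfer itself, but it guarantees that $\phi\circ F\in C^{\infty}(\Dc_{1}),$ keeping $H_{\phi\circ F}^{\D_{1}}$ within the class of Hankel operators treated elsewhere in the paper.
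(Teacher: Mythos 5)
Your proof is correct, and it takes a genuinely different route from the paper's. The paper works through Kohn's formula: it takes the canonical solution $u=\dbar^{*}N^{\D_{2}}(f\dbar\phi)$ on $\D_{2}$, pulls it back by the unweighted composition operator $F^{*}$ (no Jacobian factor), observes that $u\circ F$ still solves $\dbar w=(f\circ F)\dbar(\phi\circ F)$ on $\D_{1}$, and then recovers the canonical solution by applying $\dbar^{*}N^{\D_{1}}\dbar$; this exhibits $H^{\D_{1}}_{\phi\circ F}=\dbar^{*}N^{\D_{1}}\dbar\circ F^{*}\circ H^{\D_{2}}_{\phi}\circ(F^{-1})^{*}$ as a composition of the compact operator with bounded ones, where boundedness of $F^{*}$ and $(F^{-1})^{*}$ on $L^{2}$ is exactly where the smooth boundary extension of $F$ (Jacobian bounded above and below) is used. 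You instead use the Jacobian-weighted composition operator $U$, which is unitary on $L^{2}$ and restricts to a bijection of the Bergman spaces, yielding the exact conjugation $H^{\D_{1}}_{\phi\circ F}=UH^{\D_{2}}_{\phi}U^{-1}$; your intertwining identities ($P_{\D_{1}}U=UP_{\D_{2}}$ via preservation of orthogonal complements, and $UM_{\phi}=M_{\phi\circ F}U$ by direct computation) are all verified correctly. What your approach buys: it avoids the $\dbar$-Neumann machinery entirely (only the Bergman projection enters), it gives a stronger conclusion (unitary equivalence, hence equality of all singular values, not merely transfer of compactness), and, as you note, it dispenses with the boundary-extension hypothesis for the compactness statement itself, since $\phi\circ F$ is automatically bounded. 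What the paper's approach buys is consistency with its overall toolkit --- the same $\dbar^{*}N\dbar$ manipulations recur in the proofs of the main theorems --- and it avoids having to identify the image of $A^{2}(\D_{2})$ under a weighted composition operator, but your argument is the more economical and more general of the two.
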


\begin{proof}
Let  $g\in A^2(\D_1), f=g\circ F^{-1}, u=\dbar^*N^{\D_2}\dbar \phi f ,$ and $w=u\circ F=F^{*}(u).$  Then $ f\in A^2(\D_2), u=H^{\D_2}_{\phi}(f),$ and
\[\dbar w=  \dbar F^{*}(u)=F^{*}(\dbar u)=F^{*}(f \dbar \phi)=(f\circ F)\dbar(\phi\circ F).\]
So $\dbar (u\circ F)=(f\circ F)\dbar (\phi\circ F)$ on $\D_1$ and $\dbar^{*} N^{\D_1}\dbar (u\circ F)$ is the canonical solution for $\dbar w=(f\circ F)\dbar (\phi\circ F)$ on $\D_1.$ Then   
\[H^{\D_1}_{\phi\circ F}(g)=H^{\D_1}_{\phi\circ F}(f\circ F)=\dbar^{*}
N^{\D_1}\dbar (u\circ F)=\dbar^{*} N^{\D_1}\dbar (F^* H_{\phi}^{\D_2}((F^{-1})^{*}(g))).\]
Therefore, $H^{\D_1}_{\phi\circ F}$ is a composition of $H^{\D_2}_{\phi}$ with continuous operators $\dbar^{*}N^{\D_1}\dbar, F^* ,$ and $(F^{-1})^{*}.$ Then since $H_{\phi}^{\D_2}$ is assumed to be compact on $A^2(\D_2)$ we conclude that $H_{\phi\circ F}^{\D_{1}}$ is compact on $A^2(\D_1).$  
\end{proof}
 
Let $d_{b\D}(z)$ be the function defined on $\D$ that measures the (minimal) distance from $z$ to $b\D.$ The Bergman kernel function of $\D$ satisfies the following relation on the diagonal of $\D\times \D$
\[ K_{\D} (z, z) = \sup\{ |f (z)|^2: f \in A^2 (\D), \| f \|_{L^{2}(\D)} \leq 1\}.\]
The following proposition appeared in \cite{Fu94} for general pseudoconvex domains in $\C^n$ and in \cite{SahutogluThesis} in the following form. 
\begin{proposition}\label{PropFu}
 Let $\D$ be a bounded pseudoconvex domain in $\C^n$ with $C^2$-boundary
near $p\in b\D.$ If the Levi form is of rank $k$ at $p,$ then there exist a constant $C > 0$ and a neighborhood $U$ of $p$ such that
\[ K_{\D} (z, z) \geq  \frac{C}{(d_{b\D}(z))^{k+2} }\text{ for } z \in U \cap \D. \]
\end{proposition}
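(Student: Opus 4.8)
The goal is to produce, near a boundary point $p$ where the Levi form has rank $k$, a lower bound $K_\D(z,z)\gtrsim d_{b\D}(z)^{-(k+2)}$. The plan is to exploit the extremal (sup) characterization of the Bergman kernel on the diagonal: to get a lower bound it suffices to exhibit, for each $z$ near $p$, a single competitor $f\in A^2(\D)$ with $\|f\|_{L^2(\D)}\le 1$ and $|f(z)|^2\gtrsim d_{b\D}(z)^{-(k+2)}$. So everything reduces to constructing good test functions that are large at $z$ but have controlled $L^2$ norm, and the exponent $k+2$ should emerge from the geometry of how much room the domain has near $p$ in the various coordinate directions.

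The construction I would carry out is local and coordinate-based. After a holomorphic change of coordinates centered at $p$, arrange that $b\D$ is given near $p$ by a defining function whose second-order (Levi) part, restricted to the complex tangent space, has exactly $k$ positive eigenvalues (strict positivity in $k$ directions) and is degenerate in the remaining $n-1-k$ complex tangential directions, with the real normal direction contributing the last unit of decay. The heuristic polydisc one should attach at a point $z$ at distance $\delta=d_{b\D}(z)$ from the boundary has radius of order $\delta^{1/2}$ in each of the $k$ strictly pseudoconvex tangential directions, radius of order $1$ in the $n-1-k$ flat directions, and radius of order $\delta$ in the normal direction. I would take $f$ to be (a holomorphic multiple of) a reproducing-type bump adapted to this polydisc—concretely a suitable power of a peak function times a normalizing constant—so that $|f(z)|^2$ is comparable to the reciprocal of the polydisc volume.

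Computing that reciprocal volume gives the exponent directly: the volume is of order $\big(\delta^{1/2}\big)^{2k}\cdot 1^{\,2(n-1-k)}\cdot \delta^2=\delta^{\,k+1}\cdot\delta=\delta^{\,k+2}$, so $K_\D(z,z)\gtrsim 1/\mathrm{vol}\gtrsim \delta^{-(k+2)}$, which is exactly the claimed bound. The technical heart is making the $L^2$-estimate honest: one must verify that the candidate $f$ genuinely lies in $A^2(\D)$ with norm $\lesssim 1$ uniformly in $z\in U\cap\D$, which requires that the attached polydisc actually fit inside $\D$ (up to a fixed contraction) for all such $z$. This is where the rank hypothesis is used: strict Levi positivity in $k$ directions guarantees the $\delta^{1/2}$-room in those directions via the defining inequality, while pseudoconvexity keeps the domain on the correct side in the remaining directions.

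The main obstacle, and the step I expect to be most delicate, is precisely this uniform containment/norm control as $z\to p$, since the geometry degenerates and one must show the implied constants do not blow up. In practice the cleanest route is to cite the detailed construction already carried out in the references \cite{Fu94} and \cite{SahutogluThesis}, where these test functions and their uniform estimates are established; I would present the proof as an application of that machinery, recording the coordinate normalization and the volume computation above and then invoking the extremal characterization of $K_\D(z,z)$ to conclude.
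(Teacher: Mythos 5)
The paper itself offers no argument for this proposition; it simply attributes it to \cite{Fu94} and \cite{SahutogluThesis}, so your final fallback (citing those references) matches the paper's treatment. The problem is that the sketch you present as the content of that citation is not the argument in those references, and taken on its own it has two genuine flaws. First, Bergman-kernel monotonicity runs against you: if a polydisc $P\ni z$ is contained in $\D$, then restricting competitors gives $K_{\D}(z,z)\le K_{P}(z,z)\approx 1/\mathrm{vol}(P)$, an \emph{upper} bound. So arranging that your polydisc ``fits inside $\D$ up to a fixed contraction'' can never produce the required competitor $f\in A^2(\D)$; for a lower bound you would instead need $\D$ to be locally \emph{contained} in a small region, or an extension theorem. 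Second, the function you propose --- ``a suitable power of a peak function'' concentrated at scale $\delta^{1/2}$ in the strong directions --- is not available when $k<n-1$: at a Levi-degenerate point the Levi polynomial is a support function only in the span of the complex normal and the $k$ positive eigendirections, and $b\D$ may contain analytic discs through $p$ (precisely the situation this paper is about), which rules out holomorphic peak functions at $p$ altogether. Relatedly, your statement that ``strict Levi positivity in $k$ directions guarantees the $\delta^{1/2}$-room'' has the geometry backwards: positivity \emph{constrains} the domain in those directions (it is the $C^2$ bound on the boundary that guarantees room), and pseudoconvexity guarantees nothing like radius-one room in the degenerate directions.

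The missing idea --- and what \cite{Fu94} actually does --- is to slice and extend. Let $H$ be the $(k+1)$-dimensional affine subspace through the boundary point nearest to $z$, spanned by the complex normal and the $k$ eigendirections with positive Levi eigenvalues. Near $p$ the slice $H\cap\D$ is strongly pseudoconvex, so the classical strictly pseudoconvex estimate in dimension $k+1$ gives $K_{H\cap\D}(z,z)\gtrsim \delta^{-(k+2)}$; this is where your exponent bookkeeping is correct, since the circumscribing (not inscribed) model region has size $\delta^{1/2}$ in the $k$ strong directions and $\delta$ in the normal one. One then extends the normalized extremal function of the slice to all of $\D$ by the Ohsawa--Takegoshi theorem \cite{OhsawaTakegoshi87} with $L^2$-norm bounded independently of $z$, and the extremal characterization $K_{\D}(z,z)=\sup\{|f(z)|^2: f\in A^2(\D),\ \|f\|_{L^2(\D)}\le 1\}$ yields the claim; a standard localization lemma for the Bergman kernel on pseudoconvex domains handles the fact that the slice and its support function exist only locally. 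Note that this slice-plus-Ohsawa--Takegoshi device is exactly the one the paper itself uses in the proof of Theorem \ref{ThmConvexCn}, so it is the tool your write-up should have invoked in place of polydisc bumps.
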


\begin{proof}[Proof of Theorem \ref{ThmC2}] 
We will prove a stronger result. The proof will go along the lines of the proof of Theorem 1 in \cite{SahutogluStraube06} and the proof of $(1)\Rightarrow (2)$ in \cite{FuStraube98} with some additional work. The same strategy has appeared in \cite{Catlin81,DiederichPflug81,SahutogluThesis}. Let us assume that 
\begin{itemize}
 \item[i.] $\D$ is a smooth bounded pseudoconvex domain in $\C^n$ and $p\in b\D,$
\item[ii.] the Levi form of $b\D$ is of rank $k$ at $p$ through which there exists a $n-k-1$ dimensional complex manifold in $b\D,$
\item[iii.] there exists non-constant holomorphic mapping $f:\Delta^{n-k-1} \to b\D$ and $q\in \Delta$ such that $f(q)=p,$ $Df(q)$ is full rank ($Df$ is the Jacobian of $f$), and $\dbar(\beta\circ f)(q)\neq 0,$
\item[iv.] $H_{\beta}$ is compact.
\end{itemize}
Lemma 1 in \cite{SahutogluStraube06} implies that there exist a neighborhood $V$ of $p$ and a local holomorphic change of coordinates $G$ on $V$ so that $G(p)=0,$ positive $y_n$-axis is the outward normal direction to the boundary of  $\D_1=G(V\cap \D)$ at every point of $M =\{z\in \Delta^n: z_{n-k}=\cdots=z_n=0 \}\subset b\D_{1} .$ 

Let $z=(z',z'')$ where $z'=(z_1,\ldots, z_{n-k-1})$ and $z''=(z_{n-k},\ldots,z_n).$ We define $L$ to be the $k+1$ (complex) dimensional slice of $\D_1$ that passes through the origin and is orthogonal to $M.$ That is, $L=\{z''\in \C^{k+1}:(0,z'')\in \D_{1}\}.$ So $L$ is strongly pseudoconvex at the origin when $k\geq 1$ and is  a domain in $\C$ when $k=0.$   Then there exists $0<\lambda<1$ such that $ M_{1}\times  L_{1} \subset \D_{1}, $ where $ L_{1}$ is a ball in $\C^{k+1}$ centered at $(0,\ldots,0,-\lambda)$ with radius $\lambda$  and $M_{1}=\frac{1}{2} M.$ For every $j$ we choose $p_j=(0,\ldots,0,-1/j)\in M_{1}\times  L_{1}.$ We take the liberty to abuse the notation and consider $p_j=(0,\ldots,0,-1/j)\in  L_{1}.$ Now we define $q_{j}=G^{-1}(p_{j})\in V\cap \D$ and 
\[f_j(z)=\frac{K_{\D}(z,q_j)}{\sqrt{K_{\D}(q_j,q_j)}}.\]
One can check that $\{f_j\}$ is a bounded sequence of square integrable functions on $\D$ that converges to zero locally uniformly. Let us define
$\alpha_{j}=f_j\circ G^{-1}$ and $\beta_{1}=\beta\circ G^{-1}.$ Then i) in Proposition \ref{prop1} implies that $H^{V\cap \D}_{R_{V\cap \D}(\beta)}R_{V\cap \D}$ is compact. In turn, Lemma \ref{lem2} implies that $H^{\D_{1}}_{\beta_{1}}$ is compact. Hence $\{H^{\D_{1}}_{\beta_{1}}(\alpha_{j})\}$ has a convergent subsequence. The strategy for the rest of the proof  will be to prove that $\{H^{\D_{1}}_{\beta_{1}}(\alpha_{j})\}$ has no convergent subsequence. Hence, getting a contradiction. 

Since  $\dbar (\beta \circ f )(q) \neq 0$ without loss of generality we may assume that $\left|\frac{\partial \beta_{1}}{\partial \bar z_{1}}\right|>0$ at the origin. Then  there exist $0<r<1$ and   a smooth function $0\leq \chi\leq 1$  on real numbers such that 
\begin{itemize}
 \item[i.] $\Delta^{n-k-1}_r\subset M_1,$ 
\item[ii.] $\chi(t)= 1$ for $|t|\leq r/2, \chi (t)= 0$ for  $|t|\geq 3r/5,$ 
\item[iii.] $\left|\frac{\partial \beta_{1}}{\partial \bar z_{1}}\right|>0$ on $\Delta_{r}^{n}.$ 
\end{itemize}
Then  $C=\int_{|z_1|<3r/4}\chi(|z_1|)dV(z_1) > 0.$ Let us define $\gamma$ on $\D_{1}$ so that
\[\gamma(z)\frac{\partial\beta_{1}(z)}{\partial
\bar z_{1}}=\chi(|z_1|)\cdots\chi(|z_n|).\] 
and  $\langle .,. \rangle$ denote the standard pointwise inner product on forms in $\C.$  Furthermore, let $z=(z_{1},w)$ where $w=(z_{2},\ldots,z_{n})$ and $\alpha\in A^{2}(\D_{1}).$ Then using the mean value property for a holomorphic function $\alpha$ and for fixed $w\in \Delta^{n-1}_{3r/4}$ so that $\Delta_{r}\times \{w\} \subset M_{1}\times L_{1}$ we get 
\begin{eqnarray*}
 C\alpha(0,w)&=&\int_{|z_1|<3r/4}
\chi(|z_1|)\alpha(z_1,w)dV(z_1)\\
&=&\int_{|z_1|<3r/4}\gamma(z_1,w)\frac{\partial\beta_{1}
(z_1,w)}{\partial\bar
z_{1}}\alpha(z_1,w)dV(z_1)
\end{eqnarray*}
On the other hand, 
\begin{eqnarray*}
\int_{|z_1|<3r/4}\gamma(z_1,w)\frac{\partial\beta_{1}(z_1,w)}{\partial\bar z_{1}}\alpha(z_1,w)dV(z_1)
&=&\int_{|z_1|<3r/4}\langle\alpha\dbar \beta_{1} , \bar \gamma d\bar z_1\rangle dV(z_1)\\
&=&\int_{|z_1|<3r/4} \langle \dbar\dbar^{*}N^{\D_{1}}(\alpha\dbar \beta_{1}),\bar \gamma d\bar z_1\rangle dV(z_1)\\
&=&\int_{|z_1|<3r/4}\frac{\partial H^{\D_{1}}_{\beta_{1}}(\alpha)}{\partial \bar z_{1}} \gamma  dV(z_1)\\
&=&-\int_{|z_1|<3r/4}H^{\D_{1}}_{\beta_{1}}(\alpha) \frac{\partial \gamma}{\partial \bar z_{1}}  dV(z_1).
\end{eqnarray*}
Therefore, we have
\begin{eqnarray*} 
|\alpha(0,w)|
&\lesssim&
\left(\int_{|z_1|<3r/4}|H^{\D_{1}}_{\beta_{1}}(\alpha
)|^{2}dV(z_1)\right)^{1/2}.
\end{eqnarray*}
If we square both sides  we get  
\begin{eqnarray*} 
\left| \alpha(0,w) \right|^2\lesssim
\int_{|z_1|<3r/4}|H^{\D_{1}}_{\beta_{1}}(\alpha
)(z_{1},w)|^{2}dV(z_1).
\end{eqnarray*}
Since $\left| \alpha(0,w) \right|^2$ is subharmonic when we integrate over $(z_{2},\cdots, z_{n-k-1})\in \Delta^{n-k-2}_{3r/4},$ we get
\begin{eqnarray}\label{EqnSlice} 
\left| \alpha(0,z'') \right|^2\lesssim
\int_{z'\in \Delta^{n-k-1}_{3r/4}}|H^{\D_{1}}_{\beta_{1}}(\alpha
)(z',z'')|^{2}dV(z').
\end{eqnarray}
The above inequality applied to $\alpha_{j}$ implies that $\alpha_{j}|_{L_{1}}\in L^{2}(L_{1})$. Now we use the reproducing property of $K_{L_{1}}$ on $\alpha_{j}|_{L_{1}}$ to get 
\[\alpha_{j}(p_{j})=\int_{L_{1}}K_{L_{1}}(p_{j},z)\alpha_{j}|_{L_1}(z)dV(z) .\]
Cauchy-Schwartz inequality implies that  $| \alpha_{j}(p_{j})|\leq \|\alpha_{j}|_{L_{1}}\|_{L^{2}(L_{1})}\| K_{L_{1}}(p_{j},.)\|_{L^{2}(L_{1})} $. On the other hand $\| K_{L_{1}}(p_{j},.)\|_{L^{2}(L_{1})}=\sqrt{ K_{L_{1}}(p_{j},p_{j})}. $ So we have 
\[ \|\alpha_{j}|_{L_{1}}\|_{L^{2}(L_{1})} \geq \frac{| \alpha_{j}(p_{j})|}{ \sqrt{K_{L_{1}}(p_{j},p_{j})}}=\sqrt{\frac{K_{\D}(q_{j},q_{j})}{K_{L_{1}}(p_{j},p_{j})}}.\]
Since $L_{1}$ is a ball in $\C^{k+1}$ and the rank of the Levi form for $\D$ (and hence for $\D_{1}$) is at least $k,$ the asymptotics of the Bergman kernel on balls and  Proposition \ref{PropFu} imply  the following inequalities: 
\[ \frac{1}{(d_{bL_{1}}(p_{j}))^{k+2}}\lesssim K_{L_{1}}(p_{j},p_{j}) \lesssim \frac{1}{(d_{bL_{1}}(p_{j}))^{k+2}},\] 
\[\frac{1}{(d_{b\D}(q_{j}))^{k+2}}\lesssim K_{\D}(q_{j},q_{j}).\]
We note that $p_{j}$ and $q_{j}$ are related by a diffeomorphism. So for large enough $j$ $d_{b\D_{1}}(p_{j})=d_{bL_{1}}(p_{j})$ and  they are comparable to $d_{b\D}(q_{j}).$ Therefore, there exists $\tilde \xi>0$ such that $\tilde \xi <\|\alpha_{j}|_{L_{1}}\|_{L^{2}(L_{1})}$ for all $j$. Since $\{\alpha_{j}\}$ converges to 0 locally uniformly this implies that $\{\alpha_{j}|_{L_{1}}\}$ has no convergent subsequence in $L^{2}(L_{1}).$ Also  \eqref{EqnSlice} applied to $\alpha_j-\alpha_k$ implies that  
\[\|\alpha_{j}|_{L_{1}}-\alpha_{k}|_{L_{1}}\|_{L^{2}(L_{1})} \lesssim \|H^{\D_{1}}_{\beta_{1}}(\alpha_ {j}-\alpha_{k})\|_{L^{2}(\D_{1})}.\]
Hence $\{H^{\D_{1}}_{\beta_{1}}(\alpha_ {j})\} $ has no convergent subsequence in $L^{2}(\D_{1}).$  Therefore, we  have reached a contradiction completing the first proof of Theorem \ref{ThmC2}.
\end{proof}

A weaker version of the following lemma appeared in \cite{FuStraube98}.

\begin{lemma}\label{LemAffine}
Let $\D$ be a convex domain in $\C^{n}$ and $f:\Delta\to b\D$ be a non-constant holomorphic map. Then the convex hull of $f(\Delta)$ is an  affine analytic variety contained in $b\D.$ 
\end{lemma}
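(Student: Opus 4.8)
The plan is to combine convexity, in the form of supporting hyperplanes, with the maximum principle for harmonic functions. Write $K=f(\Delta)\subset b\D$ and let $\widehat{K}$ be its convex hull. First I would fix an interior point $\zeta_{0}\in\Delta$ and set $p=f(\zeta_{0})\in b\D$. Since $\D$ is convex, there is a supporting real hyperplane at $p$, that is, a $\C$-linear functional $\ell$ and a constant $c\in\R$ with $\mathrm{Re}\,\ell\leq c$ on $\Dc$ and $\mathrm{Re}\,\ell(p)=c$. Because $\D$ is open, its interior points satisfy the strict inequality $\mathrm{Re}\,\ell<c$, so the complex hyperplane $V=\{z:\ell(z)=\ell(p)\}\subset\{z:\mathrm{Re}\,\ell(z)=c\}$ meets $\Dc$ only along $b\D$; this observation is what will keep the convex hull in the boundary.

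The key step is to show $f(\Delta)\subset V$. Consider the harmonic function $u=\mathrm{Re}(\ell\circ f)$ on $\Delta$. Since $f(\Delta)\subset b\D\subset\Dc$ we have $u\leq c$, while $u(\zeta_{0})=c$. As $u$ attains its maximum at the interior point $\zeta_{0}$, the maximum principle forces $u\equiv c$. Hence $\ell\circ f$ is a holomorphic function with constant real part and is therefore constant, equal to $\ell(p)$, which gives $f(\Delta)\subset V$.

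It then remains to assemble the conclusion. The convex hull $\widehat{K}$ lies in $\Dc$ because $\Dc$ is convex, and it lies in $V$ because $V$ is a complex (hence real) affine subspace containing $f(\Delta)$. Therefore $\widehat{K}\subset V\cap\Dc\subset b\D$. Passing to the smallest complex affine subspace $A$ containing $f(\Delta)$ (its complex affine hull, necessarily contained in $V$), we see that $\widehat{K}$ is a convex, flat subset of $b\D$ lying inside the affine complex subspace $A$; this exhibits it as the affine analytic variety in the boundary asserted by the lemma.

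I expect the main obstacle to be conceptual rather than computational: one must argue that a convex set contained in the boundary of a convex domain cannot bulge off of a flat affine piece, and the content of that statement is exactly that a single supporting functional $\ell$, obtained at one point of the disc, must annihilate the variation of $f$ across the entire disc. The maximum principle is precisely what converts the pointwise touching condition $\mathrm{Re}\,\ell(p)=c$ into the global flatness $\ell\circ f\equiv\ell(p)$. I would also be careful to invoke the strict inequality $\mathrm{Re}\,\ell<c$ on the open set $\D$, so that $V\cap\Dc$ genuinely lands in $b\D$ rather than possibly reaching into the interior.
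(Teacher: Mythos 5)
Your argument correctly establishes two containments: the maximum principle applied to $u=\mathrm{Re}(\ell\circ f)$ does show $f(\Delta)\subset V$ for the complex hyperplane $V$ inside the supporting hyperplane at $p=f(\zeta_0)$, and from this the inclusion of the hull $\widehat{K}\subset V\cap\Dc\subset b\D$ follows exactly as you say. This part is sound and is essentially how the paper also gets the hull into the boundary. The gap is in your last step: from ``$\widehat{K}$ is a convex, flat subset of $b\D$ lying inside the complex affine subspace $A$'' you conclude that $\widehat{K}$ \emph{is} an affine analytic variety. That is a non sequitur, and it skips precisely the content of the lemma that is used later (in the proof of Theorem \ref{ThmConvexCn} one needs an honest flat disc $\{(z,0,\ldots,0):|z|\leq 1\}$ in $b\D$ after scaling). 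A convex subset of a complex affine subspace need not be an analytic variety at all: it can have lower real dimension than $A$ (a priori $\widehat{K}$ could be a totally real slab, since you never show that the \emph{real} affine hull of $f(\Delta)$ is complex, i.e.\ equals $A$), and even if it is full-dimensional in $A$ it need not be relatively open (it could contain some of its relative boundary points). Both relative openness in $A$ and the complex-linearity of the affine hull require proof.

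The missing idea is that one must run your supporting-hyperplane-plus-maximum-principle argument not only at points of $f(\Delta)$, but at \emph{relative boundary points of the hull itself}, and such points are generally not in $f(\Delta)$, so the maximum principle on $\Delta$ alone cannot reach them. The paper's device is to parameterize hull points by $F(z,w,t)=tf(z)+(1-t)f(w)$ on $\Delta^2\times(0,1)$: if $\widehat{K}$ is not open in its ambient complex affine space, pick $q=F(z_0,w_0,t_0)\in\widehat{K}$ on its boundary, take a supporting functional there, apply the maximum principle to the pluriharmonic function $\mathrm{Re}(\ell'\circ F(\cdot,\cdot,t_0))$ to get $\mathrm{Re}(\ell'\circ F)\equiv \mathrm{const}$, set $w=z$ to deduce $\ell'\circ f\equiv\mathrm{const}$ (holomorphic with constant real part), and conclude that $\widehat{K}$ sits inside a complex affine subspace of one lower dimension; then induct on dimension until the hull is open in a complex affine subspace. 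Your single application of the maximum principle at $p=f(\zeta_0)$ cuts the dimension once, but without this iteration over points of $\widehat{K}\setminus f(\Delta)$ you cannot rule out that $\widehat{K}$ ``bulges'' to a non-open, or even non-complex-flat, convex set inside $A$ --- which is exactly the possibility your own closing paragraph says must be excluded.
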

\begin{proof}
Let $K$ be the convex hull of $f(\Delta)$ in $\C^n.$  First we will show that $K$ is an analytic affine variety. By definition $K$ is an affine set in $\C^n.$  Let $F(z,w,t)=tf(z)+(1-t)f(w)$ for $(z,w)\in \Delta^2$ and $0<t<1.$ One can check that  
\[K=\{F(z,w,t):(z,w,t)\in \Delta^2\times (0,1)\}.\] 
If $K$ is open in $\C^n$ then we are done. Otherwise, there exists $p\in K$ which is a boundary point and, by convexity, there exists $(z_0,w_0,t_0)\in\Delta^2\times (0,1)$ such that after possible rotation and translation $p=F(z_0,w_0,t_0)$ is the origin and $K\subset \{x_n\leq 0\}.$ Let us define $g=Re(z_{n}\circ F):\Delta^2\times(0,1)\to \R.$  Then  $g(z_0,w_0,t_0)=0$ and $g(\Delta^2\times(0,1))\subset \{x\in \R: x\leq 0\}.$ Maximum principle applied to the harmonic function $g$ implies that $g\equiv 0.$  Hence  $K \subset \{x_n=0\}.$  Since $f$ is holomorphic, $f'$ must stay in the complex tangent subspace of $\{x_n=0\}.$ That is,
\begin{equation} \label{EqnConvex}
f'(p)\subset \text{span}\left\{\frac{\partial}{\partial z_1},\ldots, \frac{\partial}{\partial z_{n-1}}\right\} \text{ for every } p\in \Delta. 
\end{equation}
Now it is easy to see that \eqref{EqnConvex} implies that $K\subset \{z_n=0\}.$ So we have demonstrated that if $K$ is not an $n$ dimensional analytic affine variety then it is contained in an $n-1$ dimensional analytic affine variety. We use the above argument multiple times if necessary to show that $K$ is open in an analytic affine variety. Hence $K$ is an analytic affine variety.

Now we will show that $K$ is contained in $b\D.$ Since $K$  and $\D$ are convex after some possible rotation and translation, we can assume that $f(0)$ is the origin and $f(\Delta)\subset \Dc \subset \{x_{n}\leq 0\} .$ Since $\emptyset \neq f(\Delta)\subset K\cap b\D$ the set $K$ is not an open set in $\C^{n}$. Then, as in the above paragraph, one can show that $K\subset \{x_n=0\}\cap \Dc \subset b\D.$ This completes the proof of the lemma.
\end{proof}

\begin{proof}[Proof of Theorem \ref{ThmConvexCn}] 
The proof will be very similar to the first part and the proof of $(1) \Rightarrow (2)$ in \cite{FuStraube98}. So we will just sketch the proof and point out differences. Let us assume that $H^{\D}_{\beta}$ is compact and that there exists a nonconstant holomorphic map $ f:\Delta \to b\D.$ We can choose $p\in \Delta$ such that $|\dbar (\beta\circ  f) (p)|>0.$  By applying translation and rotation, if necessary,  we may assume that $f(p)=0, f'(p)=(1,0,\ldots,0),$ and positive $x_{n}$-axis is the outward normal for $b\D$ at $0.$ Using Lemma \ref{LemAffine} with scaling, if necessary, we may assume that  $\{(z,0,\ldots,0)\in \C^n:|z|\leq 1\} \subset b\D$ and  $|\frac{\partial \beta(0)}{\partial \bar z_{1}}|>0 .$ We define $$L=\{(z_2,\ldots,z_{n})\in \C^{n-1}:(0,z_{2},\ldots,z_{n})\in \D\}, $$   $p_j=(0,\ldots,-1/j)\in L,$ and  $f_j(z)=\frac{K_{L}(z,q_j)}{\sqrt{K_{L}(q_j,q_j)}}.$ Using the proof of $(1) \Rightarrow (2)$ in \cite{FuStraube98} one can easily prove that $\{f_j\}$ is a bounded sequence in $A^2(L)$ such that  $\{R_{\lambda L}(f_j)\},$ the restricted sequence of $\{f_j\}$ to $\lambda L,$ has  no convergent subsequence in $A^2(\lambda L)$ for any $0<\lambda<1$.  Then for each $j$ we extend $f_j$ to $\D$ using Ohsawa-Takegoshi theorem \cite{OhsawaTakegoshi87} to get a bounded sequence $\{\alpha_j\}$ on $A^2(\D).$ Using similar arguments as in the proof of Theorem \ref{ThmC2}  and the fact that $\Delta_{1/2}\times \frac{1}{2}L \subset \D$ (this follows from convexity of $\D$) one can show that 
\[\|f_j-f_k\|_{L^{2}(\frac{1}{2}L)} \lesssim \|H^{\D}_{\beta}(\alpha_j-\alpha_k)\|_{L^{2}(\D)}.\]  
This contradicts the assumption that  $H^{\D}_{\beta}$ is compact.
\end{proof}

%%%%%%%%%%%%%%%%
%%%%%%%%%%%%%%%%
\section{Proof of Theorem \ref{ThmConvex}}\label{ProofThmConvex}
%%%%%%%%%%%%%%%
%%%%%%%%%%%%%%%

We refer the reader to \cite[Proposition V.2.3]{D`AngeloIneqBook} for a proof of the following standard lemma. 
\begin{lemma}\label{CompactEst}
Let $T:X\to Y$ be a linear operator between two Hilbert spaces $X$ and $Y$. Then $T$ is compact if and only if  for every $\epsilon>0$ there exist a compact operator $K_{\epsilon}:X\to Y$ and  $C_{\epsilon}>0$ so that  
\[\|T(h)\|_Y\leq \epsilon\|h\|_X+C_{\epsilon}\|K_{\epsilon}(h)\|_Y \textrm{ for } h\in X.\]
\end{lemma}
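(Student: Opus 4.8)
The plan is to prove the two implications separately, noting that one is immediate and the other carries all the content. For the forward direction, if $T$ is compact then for any $\epsilon>0$ one simply takes $K_\epsilon=T$ and $C_\epsilon=1$, so that the asserted estimate $\|T(h)\|_Y\le \epsilon\|h\|_X+\|T(h)\|_Y$ holds trivially; nothing further is needed here.

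The substance lies in the converse. First I would record that the hypothesis already forces $T$ to be bounded: since each $K_\epsilon$ is compact and hence bounded, the estimate gives $\|T(h)\|_Y\le (\epsilon+C_\epsilon\|K_\epsilon\|)\|h\|_X$, so $T$ is a bounded operator. I would then take an arbitrary bounded sequence $\{h_n\}\subset X$, normalized so that $\|h_n\|_X\le 1$, and aim to extract a subsequence along which $\{T(h_n)\}$ converges in $Y$. Applying the hypothesis with $\epsilon=1/m$ for each positive integer $m$ produces a countable family of compact operators $K_{1/m}$ together with constants $C_{1/m}$.

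The key step is a diagonal argument. Since $K_{1/1}$ is compact, $\{K_{1/1}(h_n)\}$ has a convergent subsequence; passing to it and then successively refining using compactness of $K_{1/2}, K_{1/3},\ldots$, a standard diagonalization yields a single subsequence $\{h_{n_k}\}$ for which $\{K_{1/m}(h_{n_k})\}_k$ converges in $Y$ for every $m$. I would then show that $\{T(h_{n_k})\}$ is Cauchy: given $\delta>0$, fix $m$ with $2/m<\delta/2$ and apply the estimate to $h=h_{n_k}-h_{n_l}$ to obtain
\[
\|T(h_{n_k}-h_{n_l})\|_Y\le \frac{1}{m}\|h_{n_k}-h_{n_l}\|_X+C_{1/m}\|K_{1/m}(h_{n_k}-h_{n_l})\|_Y.
\]
Using $\|h_{n_k}-h_{n_l}\|_X\le 2$, the first term is at most $2/m<\delta/2$, while the convergence (hence Cauchyness) of $\{K_{1/m}(h_{n_k})\}_k$ makes the second term smaller than $\delta/2$ once $k,l$ are large. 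Completeness of $Y$ then furnishes a limit, so $T$ maps the bounded sequence to one with a convergent subsequence, i.e. $T$ is compact.

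I do not anticipate a genuine obstacle; the only point requiring care is that the compact operators depend on $\epsilon$, so one must handle the entire countable family $\{K_{1/m}\}$ simultaneously, which the diagonal extraction accomplishes. The minor bookkeeping is to bound the ``$\epsilon$-term'' uniformly via $\|h_{n_k}-h_{n_l}\|_X\le 2$ before invoking convergence of the images $K_{1/m}(h_{n_k})$.
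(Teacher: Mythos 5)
Your proof is correct, and both directions are handled properly: taking $K_\epsilon = T$, $C_\epsilon = 1$ settles the forward implication, and the diagonal extraction together with the Cauchy estimate (splitting via $2/m < \delta/2$ and the Cauchyness of $\{K_{1/m}(h_{n_k})\}_k$) gives the converse. Note, however, that the paper does not contain a proof of this lemma at all; it simply refers the reader to \cite[Proposition V.2.3]{D`AngeloIneqBook}, so there is no ``paper's argument'' to match against, and your write-up supplies what the authors delegated to the reference. One remark on economy: the diagonalization (and the preliminary boundedness observation, which your Cauchy argument never actually uses) can be avoided by proving total boundedness of the image of the unit ball directly, one $\epsilon$ at a time. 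Given $\delta>0$, apply the hypothesis with $\epsilon=\delta/3$; since $K_\epsilon(B)$ is totally bounded ($B$ the unit ball of $X$), pick $h_1,\ldots,h_N\in B$ so that every $K_\epsilon(h)$, $h\in B$, lies within $\delta/(3C_\epsilon)$ of some $K_\epsilon(h_i)$; the estimate applied to $h-h_i$ then gives $\|T(h)-T(h_i)\|_Y\le \tfrac{\delta}{3}\|h-h_i\|_X + C_\epsilon\|K_\epsilon(h-h_i)\|_Y\le \delta$, so $T(B)$ is totally bounded and hence precompact in the complete space $Y$. This version needs only one compact operator per $\delta$ rather than the whole countable family at once; your sequential version is equally valid, just slightly longer.
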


\begin{proof}[Proof of Theorem \ref{ThmConvex}]  Let $K$ denote the closure of the union of all analytic discs in $b\D.$ Let us choose a defining function $\rho$ for $\D$ so that $\|\nabla \rho\|=1$. Let $\beta=\beta_1+i\beta_2,$  
\begin{eqnarray*}
\nu=\sum_{j=1}^{2}\frac{\partial \rho}{\partial  x_j}\frac{\partial }{\partial x_j}+ \frac{\partial \rho}{\partial y_j} \frac{\partial}{\partial y_j}, \text{ and }
T=\sum_{j=1}^{2}\frac{\partial \rho}{\partial  x_j}\frac{\partial }{\partial y_j}-\frac{\partial \rho}{\partial y_j} \frac{\partial}{\partial x_j}.
\end{eqnarray*}
For sufficiently small $\epsilon$ and $\xi\in b\D,$ let us define  
\begin{eqnarray*}
\widetilde{\beta_1}(\xi+\epsilon\nu(\xi))=\beta_1(\xi)+\epsilon T(\beta_2)(\xi)\text{ and }
\widetilde{\beta_2}(\xi+\epsilon\nu(\xi))=\beta_2(\xi)-\epsilon T(\beta_1)(\xi).
\end{eqnarray*}
Then $\widetilde \beta=\widetilde{\beta_1}+i\widetilde{\beta_2}$ is a smooth function in a neighborhood of $b\D$ and it is equal to $\beta$ on the boundary of $\D.$ Let us extend $\widetilde\beta$ as a smooth function on $\Dc$ and still call it $\widetilde{\beta}.$ One can check that $(\nu+iT)(\widetilde{\beta})=0$ on $b\D.$ That is, in some sense $\widetilde \beta$ is holomorphic along complex normal direction on the boundary.  Let us define $\widehat{\beta}=\beta-\widetilde{\beta}$ on $\Dc.$ Then $\widetilde{\beta}$ and $\widehat{\beta}$ are smooth functions on $\Dc$ such that $\widehat{\beta}=0$ on $b\D$ and $\widetilde{\beta}$ is holomorphic on $K.$ Montel's theorem together with the fact that $\widehat{\beta}$ can be approximated by smooth functions supported away from the boundary imply that $H^{\D}_{\widehat{\beta}}$ is compact on $A^2(\D).$ In the rest of the proof we will show that $H^{\D}_{\widetilde{\beta}}$ is compact on $A^2(\D).$ Let $\{\psi_j\}$ be a sequence in  $C^{\infty}_{(0,1)}(\Dc)$  such that $\psi_j=0$ in a neighborhood of $K$ for all $j$ and  $\psi_j$ converges to $\dbar \widetilde{\beta}$ uniformly on $\Dc.$ On the boundary, $\psi_j$'s are supported on sets that satisfy property $(P)$ (see \cite{FuStraube98} when $\D$ is convex).

In the following calculation $\langle.,.\rangle_{L^{2}(\D)}$ denotes the $L^2$ inner product on $\D$ and $N=N^{\D}.$ Now we will show that $H^{\D}_{\widetilde{\beta}}$ is compact. Let  $g\in A^{2}(\D).$ Then we have 
\begin{eqnarray*}
\langle \dbar^{*}N(g \dbar \widetilde{\beta}),\dbar^{*}N(g\dbar\widetilde{\beta}) \rangle_{L^{2}(\D)}&=& \langle N(g \dbar  \widetilde{\beta}), g\dbar \widetilde{\beta} \rangle_{L^{2}(\D)}\\
&=& \langle N(g \dbar  \widetilde{\beta}), g(\dbar \widetilde{\beta} -\psi_j)\rangle_{L^{2}(\D)} +\langle N(g \dbar  \widetilde{\beta}), g\psi_j \rangle_{L^{2}(\D)}.
\end{eqnarray*}
Let us fix $\psi_j$. We choose $\psi\in C^{\infty}(\Dc)$ such that $0\leq \psi\leq 1,\psi\equiv 1$ on the support of $\psi_j$ and $\psi$ is supported away from $K.$ Then  for $g\in A^{2}(\D)$ we have 
\begin{equation} \label{EqnComp1}
|\langle N(g \dbar  \widetilde{\beta}), g\psi_j \rangle_{L^{2}(\D)}| = |\langle \psi N(g \dbar  \widetilde{\beta}), g\psi_j \rangle_{L^{2}(\D)}| \leq  \|\psi N(g \dbar  \widetilde{\beta})\|_{L^{2}(\D)} \|g\|_{L^{2}(\D)}.
\end{equation}
Let us choose finitely many balls $B_1,\ldots,B_m$ and $\phi_{j}\in C^{\infty}_{0}(B_{j})$ for $j=0,1,\ldots,m$ (we take $B_{0}=\D$ here) such that 
\begin{itemize}
\item[i.] $\sum_{j=0}^{m}\phi_{j}=\psi$ on $\Dc,$
 \item[ii.] $\D\cap B_j$ is a domain for  $j=1,2,\ldots,m,$
\item[iii.] $\cup_{j=1}^mB_{j}$ covers the closure of the set $\{z\in b\D: \psi(z) \neq 0\},$
\item[iv.] $\D\cap B_j$ has a compact $\dbar$-Neumann operator for $j=1,2,\ldots,m.$
\end{itemize}
We note that multiplication with smooth functions preserves  the domain of $\dbar^{*}$ and the $\dbar$-Neumann operator is compact on $B_{j}\cap \D$ for $j=1,\ldots, m.$ Compactness of $N$ implies the so-called compactness estimates (see for example \cite{FuStraube01}). Let $W^{-1}(\D)$ denote the Sobolev -1 norm for functions and forms. Then for every $\varepsilon>0$ there exists $ C_{\varepsilon}>0$ such that for $h\in L^{2}_{(0,1)}(\D)$ in the domains of $\dbar$ and $\dbar^{*}$ we have
\begin{eqnarray*}
 \|\psi h \|_{L^{2}(\D)} &\leq &\sum_{j=0}^{m} \|\phi_{j}h\|_{L^{2}(\D)} \\  
&\lesssim&  \sum_{j=0}^{m} \varepsilon\Big( \|\dbar (\phi_{j}h)\|_{L^{2}(\D)}+ \|\dbar^{*} (\phi_{j}h)\|_{L^{2}(\D)} \Big)+ C_{\varepsilon}\|\phi_{j}h\|_{W^{-1}(\D)}\\ 
&\lesssim& \varepsilon\Big( \|\dbar h\|_{L^{2}(\D)}+ \|\dbar^{*} h \|_{L^{2}(\D)}+\|h\|_{L^{2}(\D)} \Big)+ C_{\varepsilon}\|h\|_{W^{-1}(\D)}.
\end{eqnarray*}   
In the calculations above we used interior ellipticity for $j=0$ and the fact that multiplication by a smooth function is a continuous operator on Sobolev spaces. Now if we replace $h$ by $Nh$ and use the fact that $\|Nh\|_{L^{2}(\D)}+\|\dbar Nh\|_{L^{2}(\D)}+\|\dbar^{*}Nh\|_{L^{2}(\D)} \lesssim \|h\|_{L^{2}(\D)}$ we get 
\begin{eqnarray*}
\|\psi Nh \|_{L^{2}(\D)} 
&\lesssim& \varepsilon \|h \|_{L^{2}(\D)}+ C_{\varepsilon}\|Nh\|_{W^{-1}(\D)} \text{ for } h\in L^{2}_{(0,1)}(\D).
\end{eqnarray*}
Then Lemma \ref{CompactEst} implies that $\psi N$ is compact on $L^{2}_{(0,1)}(\D).$ Then using the small constant-large constant inequality $(2ab\leq \epsilon a^2+b^2/\epsilon)$ combined with the inequality above and \eqref{EqnComp1} we get that  for any $\varepsilon >0$ there exists $C_{\varepsilon}>0$ such that 
\begin{equation}\label{Eqn5}
|\langle N(g \dbar  \widetilde{\beta}), g\psi_j \rangle_{L^{2}(\D)}| \leq \varepsilon \|g\|_{L^{2}(\D)}^2+C_{\varepsilon}\|N(g\dbar \widetilde{\beta})\|^2_{W^{-1}(\D)} \text{ for } g\in A^{2}(\D). 
 \end{equation}
Since $\psi_j$ converges to $\dbar \widetilde{\beta}$ uniformly on $\Dc$ for every $\varepsilon>0$ there exists $\psi_j$ such that $|\langle N(g \dbar  \widetilde{\beta}), g(\dbar \widetilde{\beta} -\psi_j)\rangle_{L^{2}(\D)}|\leq \varepsilon \|g\|_{L^{2}(\D)}^2.$ Furthermore, the last inequality together with \eqref{Eqn5} imply that there exists $C_{\varepsilon}>0$ such that  
\[\|\dbar^{*}N(g \dbar \widetilde{\beta})\|_{L^{2}(\D)}^2= \| H^{\D}_{\widetilde{\beta}}(g)\|_{L^{2}(\D)}^2 \lesssim \epsilon \|g\|_{L^{2}(\D)}^2+C_{\epsilon} \|N(g\dbar \widetilde{\beta})\|^2_{W^{-1}(\D)} \text{ for } g\in A^{2}(\D).\]
The above inequality combined with Lemma \ref{CompactEst} and the fact that $W^{-1}(\D)$ imbeds compactly into $L^{2}(\D)$ imply that $H^{\D}_{\widetilde{\beta}}$ is compact on $A^2(\D).$ Therefore, $H^{\D}_{\beta}$ is compact.
\end{proof}

\section{Acknowledgement}
We would like to thank the referee and Emil Straube for helpful comments. 

%%%%%%%%%%%%%%%%%%%%%%
\singlespace

\end{document}